\newtheorem{theorem}{Theorem}[section]
\newtheorem{lemma}[theorem]{Lemma}
\theoremstyle{definition}
\theoremstyle{remark}
\numberwithin{equation}{section}
\begin{document}

\title{The boundary case of the $J$-flow
}

\author{Wei Sun}

\address{Institute of Mathematical Sciences, ShanghaiTech University, Shanghai, China}
\email{sunwei@shanghaitech.edu.cn}


\begin{abstract}
In this paper, we shall study the boundary case for the $J$-flow under certain geometric assumptions. 

\end{abstract}

\maketitle



\medskip
\section{Introduction}

Let $M$ be a compact manifold without boundary of complex dimension $n\geq 2$, carrying two K\"ahler forms $\chi$ and $\omega$. 
The $J$-flow is
\begin{equation}
\label{J-flow}
	\frac{\partial\varphi}{\partial t} = c - \frac{n (\chi + \sqrt{-1}\partial\bar\partial\varphi)^{n - 1} \wedge \omega}{(\chi + \sqrt{-1} \partial\bar\partial\varphi)^n} ,\qquad \varphi (z, 0) = \varphi_0 (z), 
\end{equation}
where $\chi + \sqrt{-1}\partial\bar\partial \varphi_0 > 0$ and
\begin{equation*}
	c := \frac{n \int_M \chi^{n - 1} \wedge \omega}{\int_M \chi^n}. 
\end{equation*}
The $J$-flow was constructed to study
\begin{equation}
\label{equation-1-2}
	c \left(\chi + \sqrt{-1}\partial\bar\partial u\right)^n = n \left(\chi + \sqrt{-1}\partial\bar\partial u\right)^{n - 1} \wedge \omega, \qquad \chi + \sqrt{-1}\partial\bar\partial u > 0 ,
\end{equation}
which is a stationary state of the $J$-flow~\eqref{J-flow}. 
Equation~\eqref{equation-1-2} was independently discovered by Donaldson~\cite{Donaldson1999} and Chen~\cite{Chen2000} under different geometric backgrounds.

For smooth admissible solutions, Donaldson~\cite{Donaldson1999} proposed that it is a sufficient condition that
\begin{equation}
\label{condition-1-3}
	[nc \chi - \omega] > 0,
\end{equation}
which was confirmed by Chen~\cite{Chen2000} for K\"ahler surfaces. 
Indeed, Equation~\eqref{equation-1-2} can be rewritten as a complex Monge-Amp\`ere equation on K\"ahler surfaces, which was solved by Yau~\cite{Yau1978}. 
However, Condition~\eqref{condition-1-3} does not work well in higher dimensions. 
Chen~\cite{Chen2004} proved the long time smoothness and existence of the $J$-flow. 
Later, a new condition was proposed by  Song and Weinkove~\cite{SongWeinkove2008}, that is,
there exists a K\"ahler form $\chi' \in [\chi]$ such that
\begin{equation}
\label{inequality-1-4}
c \chi'^{n - 1} - (n - 1) \chi'^{n - 2} \wedge \omega > 0 .
\end{equation}
Song and Weinkove~\cite{SongWeinkove2008} proved that Condition~\eqref{inequality-1-4} is sufficient and necessary for the existence of smooth admissible solution to Equation~\eqref{equation-1-2} by proving the convergence for the $J$-flow~\eqref{J-flow}. 
Later, Fang, Lai and Ma~\cite{FangLaiMa2011} extended the results to complex Monge-Amp\`ere type equations by parabolic flows, and defined the sufficient and necessary solvability condition as {\it cone condition}.

When Condition~\eqref{inequality-1-4} degenerates to the boundary case
\begin{equation}
\label{boundary-case-1-5}
c \chi'^{n - 1} - (n - 1) \chi'^{n - 2} \wedge \omega \geq 0 ,
\end{equation}
it is interesting to study the behavior of the $J$-flow~\eqref{J-flow}. 
On K\"ahler surfaces, Fang, Lai, Song and Weinkove~\cite{FangLaiSongWeinkove2014} adapted the trick of Chen~\cite{Chen2000}, and solved
\begin{equation}
\label{equation-1-6}
	  (c \chi - \omega + \sqrt{-1} \partial\bar\partial u)^2 = \omega^2 .
\end{equation}
According to  \eqref{boundary-case-1-5}, $[c\chi - \omega]$ is semipositive and big. 
Eyssidieux, Guedj and Zeriahi~\cite{EyssidieuxGuedjZeriahi2009} showed that there is a bounded pluripotential solution to Equation~\eqref{equation-1-6}, which is smooth in $Amp (c\chi - \omega)$.
Taking advantage of the $L^\infty$ estimate for \eqref{equation-1-6}, Fang, Lai, Song and Weinkove~\cite{FangLaiSongWeinkove2014} proved the $t$-independent $C^0$ estimate, and then studied the behavior of the solution. In this paper, we also derive the $C^0$ estimate via this way.


To study the boundary case, we need to impose more geometric conditions. 
Instead of Equation~\eqref{J-flow}, we study more general gradient flows,
\begin{equation}
\label{equation-3-1}
	\frac{\partial\varphi}{\partial t} = c - \frac{n (\chi + \tilde \chi + \sqrt{-1} \partial\bar\partial\varphi)^{m } \wedge \omega^{n - m}}{(\chi + \tilde \chi + \sqrt{-1} \partial\bar\partial\varphi)^n} , \qquad \varphi (z,0) = \varphi_0 (z) \in \mathcal{H},
\end{equation}
where $1 \leq m < n$, 
$
	\mathcal{H} := \{ u \in C^\infty (M) | \chi + \tilde \chi + \sqrt{-1} \partial\bar\partial u > 0\}
$
 and $c$ is defined by
\begin{equation}
	c := \frac{n \int_M (\chi + \tilde \chi)^{m } \wedge \omega^{n - m}}{\int_M (\chi + \tilde \chi)^n} >  0 .
\end{equation}
We impose the condition that $\tilde \chi$ is semipositve and big, and the boundary case of cone condition
\begin{equation}
\label{inequality-1-5}
	c \chi^{n - 1} - m \chi^{m -1} \wedge \omega^{n - m} \geq 0 .
\end{equation}
Then the stationary state of \eqref{equation-3-1} is a Monge-Amp\`ere type equation, 
\begin{equation}
\label{elliptic-equation}
c (\chi + \tilde \chi + \sqrt{-1} \partial\bar\partial \psi)^n = n (\chi + \tilde \chi +\sqrt{-1} \partial\bar\partial \psi)^m \wedge \omega^{n - m}.
\end{equation}
With the assumptions above, we can prove the following theorem.
\begin{theorem}
\label{main-theorem}
Let $(M,\omega)$ be a compact complex manifold without boundary of complex dimension $n \geq 2$. Assume that there is a K\"ahler form $\chi$ and a big semipositive form $\tilde \chi$ satisfying the boundary case~\eqref{inequality-1-5} of cone condition. Then for any $\phi_0 \in \mathcal{H}$, the solution $\varphi(t)$ to Equation~\eqref{equation-3-1} converges in $C^\infty_{loc} (Amp (\tilde \chi))$ to a  function $\varphi_{\infty}$, which is a pluripotential solution to Equation~\eqref{elliptic-equation} such that 
\begin{equation*}
	 \sum^n_{i = 0} \int_M \varphi_\infty \left(\chi + \tilde \chi + \sqrt{-1} \partial\bar\partial \varphi_\infty\right)^i \wedge (\chi + \tilde \chi)^{n - i} 
	=
	 \sum^n_{i = 0} \int_M \varphi_0 \left(\chi + \tilde \chi + \sqrt{-1} \partial\bar\partial \varphi_0\right)^i \wedge (\chi + \tilde \chi)^{n - i}
	. 
\end{equation*}

\end{theorem}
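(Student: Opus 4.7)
The plan is to view the flow \eqref{equation-3-1} as the downward gradient flow of a generalized Mabuchi-type functional $\mathcal{F}$ whose Euler-Lagrange equation is \eqref{elliptic-equation}, and to combine a $t$-independent $C^0$ estimate (obtained via a bounded pluripotential barrier) with interior smoothing estimates on $\mathrm{Amp}(\tilde\chi)$ and the monotonicity of $\mathcal{F}$. Throughout, write $\omega_\varphi := \chi + \tilde\chi + \sqrt{-1}\partial\bar\partial\varphi$. The argument breaks into four ingredients: existence of a bounded pluripotential solution $\psi$ to \eqref{elliptic-equation}; a uniform $C^0$ bound on $\varphi(t)$; interior $C^\infty$ estimates on compact subsets of $\mathrm{Amp}(\tilde\chi)$; and use of a conserved quantity together with a decreasing functional to extract and identify the limit.

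For the first two steps I would adapt the strategy announced in the introduction, following \cite{FangLaiSongWeinkove2014} in the surface case. The boundary condition \eqref{inequality-1-5} together with the bigness of $\tilde\chi$ allows one to apply Eyssidieux-Guedj-Zeriahi-type pluripotential techniques \cite{EyssidieuxGuedjZeriahi2009} to \eqref{elliptic-equation} (after a suitable reformulation) to produce a bounded $\psi$ that is smooth on $\mathrm{Amp}(\tilde\chi)$. With $\psi$ at hand, applying the parabolic maximum principle to $\varphi(t) - \psi - \bar c(t)$, where $\bar c(t)$ is a suitable time-dependent normalization, yields
\[
\|\varphi(t)\|_{L^\infty(M)} \leq C
\]
independent of $t$.

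Next, on each compact $K \subset\subset \mathrm{Amp}(\tilde\chi)$ one has $\tilde\chi \geq \delta_K \omega$, so $\chi + \tilde\chi$ is uniformly K\"ahler there. Following Song-Weinkove \cite{SongWeinkove2008} and Fang-Lai-Ma \cite{FangLaiMa2011}, I would apply the maximum principle to a cutoffed quantity of the form $\eta \cdot \log \mathrm{tr}_\omega \omega_\varphi - A \varphi$, with $\eta$ a smooth cutoff supported in a slightly larger ample neighborhood, to bound $\omega_\varphi$ from above on $K$; the boundary cone condition \eqref{inequality-1-5} supplies the good sign on the linearization needed to absorb the bad third-order terms. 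Evans-Krylov and Schauder bootstrapping then upgrade this to $C^\infty_{loc}$ bounds on $\mathrm{Amp}(\tilde\chi)$ uniform in $t$.

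Finally, a direct integration by parts shows that the time derivative of $\sum_{i=0}^{n} \int_M \varphi\, \omega_\varphi^i \wedge (\chi+\tilde\chi)^{n-i}$ along \eqref{equation-3-1} equals $(n+1)\int_M \dot\varphi\, \omega_\varphi^n$, which vanishes by the cohomological definition of $c$; hence this sum is preserved along the flow, and passing to the limit via the $C^0$ bound and weak continuity of mixed Monge-Amp\`ere measures yields the asserted identity. The generalized $J$-functional $\mathcal{F}$ is monotone decreasing with $\tfrac{d}{dt}\mathcal{F}(\varphi(t)) = -\int_M |\dot\varphi|^2 \omega_\varphi^n$, which combined with the uniform estimates produces a subsequential $C^\infty_{loc}(\mathrm{Amp}(\tilde\chi))$ limit $\varphi_\infty$ solving \eqref{elliptic-equation}; uniqueness modulo constants of the pluripotential solution, together with the monotonicity of $\mathcal{F}$, promotes subsequential convergence to full convergence. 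I expect the principal obstacle to be the $C^0$ estimate and the uniform second-order estimate near $\partial\mathrm{Amp}(\tilde\chi)$: the cone condition \eqref{inequality-1-5} holds only weakly and $\chi + \tilde\chi$ degenerates transversely to the ample locus, so standard maximum-principle arguments break down and must be replaced by the pluripotential barrier combined with carefully chosen cutoffs.
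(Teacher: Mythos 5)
Your overall strategy matches the paper's: a bounded pluripotential solution $\psi$ of \eqref{elliptic-equation} (smooth on $Amp(\tilde\chi)$) serves as a barrier for a $t$-independent $C^0$ bound, a localized second-order estimate exploits the boundary cone condition, and the limit is identified by combining the monotone $J$-functional $cJ_n - nJ_m$ (whose derivative along the flow is $\int_M(\partial_t\varphi)^2\omega_\varphi^n$), the conserved quantity $J_n(\chi+\tilde\chi,\varphi)=\tfrac{1}{n+1}\sum_i\int_M\varphi\,\omega_\varphi^i\wedge(\chi+\tilde\chi)^{n-i}$, and uniqueness of the pluripotential solution up to constants. Two points, however, need repair.

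First, you never establish long-time existence, which is implicitly required before one can speak of convergence as $t\to+\infty$. Your second-order estimates are interior to $Amp(\tilde\chi)$, but the flow lives on all of $M$, and to continue it past a finite time $T$ one needs a bound on $\mathrm{tr}\,\omega_\varphi$ on all of $M$ up to time $T$. This is why the paper proves \emph{two} second-order estimates: a $t$-independent one localized to $Amp(\tilde\chi)$ via the test function $\ln w - A(\varphi-\rho)$, and a separate global one $w\le Ce^{A(\varphi+ct)}$ on all of $M$ (growing in $t$ but finite for finite $T$), the latter being what feeds Evans--Krylov and the openness argument for $T=+\infty$. Second, your localization device is a compactly supported cutoff $\eta$ multiplying $\log\mathrm{tr}_\omega\omega_\varphi$; in this degenerate setting the cross terms $\nabla\eta\cdot\nabla\log\mathrm{tr}_\omega\omega_\varphi$ cannot be absorbed uniformly, and the correct device (used in the paper) is the quasi-plurisubharmonic function $\rho$ with analytic singularities supplied by the bigness of $\tilde\chi$, satisfying $\tilde\chi+\sqrt{-1}\partial\bar\partial\rho\ge\delta\chi$ and tending to $-\infty$ on $M\setminus Amp(\tilde\chi)$, so that the maximum of $\ln w - A(\varphi-\rho)$ is automatically attained inside $Amp(\tilde\chi)$. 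Relatedly, your diagnosis that ``$\chi+\tilde\chi$ degenerates transversely to the ample locus'' is off: since $\chi$ is K\"ahler and $\tilde\chi\ge 0$, the form $\chi+\tilde\chi\ge\chi$ is uniformly K\"ahler on all of $M$; what is confined to $Amp(\tilde\chi)$ is not the parabolicity of the flow but the smoothness of the stationary solution and the time-uniform estimates, because the cone condition \eqref{inequality-1-5} can only be made strict after perturbing by $\epsilon(\tilde\chi+\sqrt{-1}\partial\bar\partial\rho)$.
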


\medskip

\section{Preliminary}

In this section, we shall recall some fundamental notations and formulae. 
In this paper, we denote by the letter $C$ a positive constant number, which vary from line to line.

\medskip

\subsection{The formulae in a local chart}

Rewriting Equation~\eqref{equation-3-1},
\begin{equation}
\label{equation-3-12}
	\frac{\partial \varphi}{\partial t} = c - \frac{n}{C^m_n} S_{n - m} (X^{-1}) .
\end{equation}
Differentiating \eqref{equation-3-12},
\begin{equation}
\label{equation-3-13}
	\partial_t \left(\partial_t\varphi\right)
	=
	\frac{n}{C^m_n} \sum_i S_{n - m - 1;i}(X^{-1}) (X^{i\bar i})^2 \left( \partial_t \varphi \right)_{i\bar i} ,
\end{equation}
\begin{equation}
	\partial_t \varphi_l 
	=
	\frac{n}{C^m_n} \sum_i S_{n - m - 1;i}(X^{-1}) (X^{i\bar i})^2 X_{i\bar il} ,
\end{equation}
and
\begin{equation}
\label{inequality-3-15}
\begin{aligned}
	\partial_t \varphi_{l\bar l}
	&\leq 
	 	 \frac{n}{C^m_n} \sum_i S_{n - m - 1;i}(X^{-1}) (X^{i\bar i})^2 X_{i\bar il\bar l} \\
	 	 &\qquad - \frac{n}{C^m_n}\sum_{i,j} S_{n - m - 1;i}(X^{-1}) (X^{i\bar i})^2 X^{j\bar j} X_{j\bar i\bar l } X_{i\bar jl} 
	 . 
\end{aligned}
\end{equation}
Applying maximum principle to Equation~\eqref{equation-3-13}, we can see that $\partial_t \varphi$ reaches the extremal values at $t = 0$, i.e. 
\begin{equation}
\label{inequality-2-5}
	 \inf_{M \times \{0\}} \frac{\partial\varphi}{\partial t} \leq \frac{\partial \varphi}{\partial t} \leq \sup_{M \times \{0\}} \frac{\partial\varphi}{\partial t} ,
\end{equation}
on the maximal time interval $[0,T)$. 
Consequently
\begin{equation}
\label{inequality-2-6}
\begin{aligned}
	\inf_M  \frac{(\chi + \tilde \chi + \sqrt{-1} \partial\bar\partial\varphi_0)^n}{(\chi + \tilde \chi + \sqrt{-1} \partial\bar\partial \varphi_0)^m \wedge \omega^{n - m}} 
	&\leq \frac{(\chi + \tilde \chi + \sqrt{-1} \partial\bar\partial\varphi)^n}{(\chi + \tilde \chi + \sqrt{-1} \partial\bar\partial \varphi)^m \wedge \omega^{n - m}} \\
	&\leq \sup_M  \frac{(\chi + \tilde \chi + \sqrt{-1} \partial\bar\partial\varphi_0 )^n}{(\chi + \tilde \chi + \sqrt{-1} \partial\bar\partial \varphi_0)^m \wedge \omega^{n - m}} .
\end{aligned}
\end{equation}
Therefore, $\chi + \tilde \chi + \sqrt{-1} \partial\bar\partial \varphi$ remains K\"ahler at any time on the maximal time interval $[0,T)$.

\medskip
\subsection{$J$-functionals}

As in \cite{Chen2000}\cite{FangLaiMa2011}\cite{Sun2015p}, we need to use a series of $J$-functional. 
For any curve $v(s) \in \mathcal{H}$, we define the functional $J_i (\chi + \tilde \chi , v)$ $(0 \leq i \leq n)$ by
\begin{equation*}
	\frac{d J_i}{d s} := \int_M \frac{\partial v}{\partial s} (\chi + \tilde \chi + \sqrt{-1} \partial\bar\partial v)^i \wedge \omega^{n - i}.
\end{equation*}
Then we obtain a formula for $J_i$ with
\begin{equation*}
	J_i (\chi + \tilde \chi, u) 
	= 
	\int^1_0 \left(\int_M \frac{\partial v}{\partial s} (\chi + \tilde \chi + \sqrt{-1} \partial\bar\partial v)^i \wedge \omega^{n - i} \right)ds
	,
\end{equation*}
for any path $v(s) \in \mathcal{H}$ connecting $0$ and $u$. 
Since $\mathcal{H}$ is simply connected, the functionals are independent of the choice of path. 
Along the straight line $v(s) = su$, we obtain that
\begin{equation}
\label{equality-3-11}
\begin{aligned}
	J_i (\chi + \tilde \chi, u)
	&=
	\int^1_0 \left( \int_M u (\chi + \tilde \chi + \sqrt{-1}\partial\bar\partial v)^i \wedge \omega^{n - i}\right) ds \\
	&= \frac{1}{i + 1} \sum^i_{j = 0}  \int_M u (\chi + \tilde \chi + \sqrt{-1 }\partial\bar\partial u)^j \wedge (\chi + \tilde\chi)^{i - j} \wedge \omega^{n - i}
	.
\end{aligned}
\end{equation}
From \eqref{equality-3-11}, 
\begin{equation}
\label{equality-3-12}
\begin{aligned}
	&\quad 
	c J_n (\chi + \tilde \chi, u) - n J_m (\chi + \tilde \chi, u) \\
	&=
	\frac{c}{n + 1} \sum^n_{j=0} \int_M u (\chi + \tilde \chi + \sqrt{-1 }\partial\bar\partial u)^j \wedge (\chi + \tilde\chi)^{n - j} \\
	&\qquad - \frac{n}{m + 1} \sum^m_{j = 0} \int_M u (\chi + \tilde \chi + \sqrt{-1 }\partial\bar\partial u)^j \wedge (\chi + \tilde\chi)^{m - j} \wedge \omega^{n - m} 
	.
\end{aligned}
\end{equation}
In particular,
\begin{equation}
\label{equality-2-9}
\begin{aligned}
	&\quad 
	c J_n (\chi + \tilde \chi, u + C) - n J_m (\chi + \tilde \chi, u + C) \\
	&= 
	c J_n (\chi + \tilde \chi, u) - n J_m (\chi + \tilde \chi, u)
	\\
	&\qquad +  C \left(c  \int_M    (\chi + \tilde\chi)^{n}  -  n   \int_M    (\chi + \tilde\chi)^{m } \wedge \omega^{n - m} \right) \\
	&=
	c J_n (\chi + \tilde \chi, u) - n J_m (\chi + \tilde \chi, u) 
	.
\end{aligned}
\end{equation}
If $\varphi$ is uniformly bounded which will be proven in Section~\ref{C0}, we derive from \eqref{equality-3-12} that
\begin{equation*}
\label{inequality-2-9}
	c J_n (\chi + \tilde \chi, \varphi) - n J_m (\chi + \tilde \chi, \varphi) \leq C ,
\end{equation*}
where $C$ depends on the uniform bound of $\varphi$. 
Along the solution flow $\varphi (t)$ to  Equation~\eqref{equation-3-1},
\begin{equation*}
\label{inequality-3-14}
\begin{aligned}
	\frac{d}{dt} \left(c J_n (\chi + \tilde \chi, \varphi) - n J_m (\chi + \tilde \chi, \varphi)\right)
	&=
	c \int_M \frac{\partial \varphi}{\partial t} (\chi + \tilde \chi + \sqrt{-1} \partial\bar\partial \varphi)^n \\
	&\quad  - n \int_M \frac{\partial \varphi}{\partial t} (\chi + \tilde \chi + \sqrt{-1} \partial\bar\partial \varphi)^m \wedge \omega^{n - m} \\
	&= \int_M \left( \frac{\partial \varphi}{\partial t} \right)^2 (\chi + \tilde \chi + \sqrt{-1} \partial\bar\partial\varphi)^n \\
	&\geq 0
	.
\end{aligned}
\end{equation*}
Therefore, for any $t > 0$,
\begin{equation*}
C
\geq
c J_n (\chi + \tilde \chi, \varphi (t)) - n J_m (\chi + \tilde \chi, \varphi (t)) 
\geq
c J_n (\chi + \tilde \chi, \varphi_0) - n J_m (\chi + \tilde \chi, \varphi_0)
.
\end{equation*}
In particular,
\begin{equation}
\label{inequality-2-10}
\begin{aligned}
&
	\int^{+\infty}_0 \left(\int_M \left(\frac{\partial \varphi}{\partial t}\right)^2 (\chi + \tilde \chi + \sqrt{-1}\partial\bar\partial\varphi)^n \right) dt \\
	&\leq 
	C - \left(c J_n (\chi + \tilde \chi, \varphi_0) - n J_m (\chi + \tilde \chi, \varphi_0)\right)
	< + \infty
	. 
\end{aligned}
\end{equation}
Similarly, we also have
\begin{equation}
\label{equality-2-16}
\begin{aligned}
	\frac{d}{dt} J_n (\chi + \tilde \chi, \varphi)
	&=
	\int_M \frac{\partial \varphi}{\partial t} (\chi + \tilde\chi + \sqrt{-1} \partial\bar\partial \varphi)^n \\
	&= c \int_M (\chi + \tilde\chi )^n  -  n \int_M (\chi + \tilde \chi  )^m \wedge \omega^{n - m} \\
	&= 0
	.
\end{aligned}
\end{equation}
Identity~\eqref{equality-2-16} implies that
\begin{equation}
\label{equality-2-17}
	J_n (\chi + \tilde \chi, \varphi) = J_n (\chi + \tilde \chi, \varphi_0) ,
\end{equation}
and we then have that for any $t \in [0,T)$, 
\begin{equation}
\label{inequality-2-18}
	\sup_M \frac{\partial\varphi}{\partial t} \geq 0 \geq \inf_M \frac{\partial \varphi}{\partial t} .
\end{equation}


\medskip
\section{$C^0$ estimate}
\label{C0}

In this section, we shall prove $C^0$ estimate of admissible solution to Equation~\eqref{equation-3-1}. We shall adpat the first method in \cite{FangLaiSongWeinkove2014}.
\begin{theorem}
Let $\varphi (z,t)$ be the admissible solution to Equation~\eqref{equation-3-1}.
Then there is a constant $C > 0$ independent of time $t$ such that
\begin{equation*}
	\Vert \varphi (z,t) \Vert_{L^\infty} \leq C .
\end{equation*}
\end{theorem}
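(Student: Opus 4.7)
The strategy is to adapt the first method of Fang, Lai, Song and Weinkove \cite{FangLaiSongWeinkove2014}, bootstrapping from the uniform $L^\infty$ bound available for a pluripotential solution of a suitable degenerate Monge-Amp\`ere equation to a $t$-independent $L^\infty$ bound on $\varphi(t)$.

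First, by \eqref{inequality-2-5}, the time derivative $\partial_t\varphi$ is uniformly bounded on $M\times[0,T)$, so that $c-\partial_t\varphi$ lies in a compact subset of $(0,+\infty)$ independent of $t$. Rewriting the flow as
\[
(\chi+\tilde\chi+\sqrt{-1}\partial\bar\partial\varphi)^m\wedge\omega^{n-m}
=\frac{c-\partial_t\varphi}{n}(\chi+\tilde\chi+\sqrt{-1}\partial\bar\partial\varphi)^n
\]
and applying Maclaurin's inequality to the eigenvalues of $\chi+\tilde\chi+\sqrt{-1}\partial\bar\partial\varphi$ with respect to $\omega$, I would deduce the uniform Monge-Amp\`ere lower bound
\[
(\chi+\tilde\chi+\sqrt{-1}\partial\bar\partial\varphi)^n \geq c_0\,\omega^n
\]
for some $c_0>0$ independent of $t$. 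The conservation law \eqref{equality-2-17} simultaneously fixes $J_n(\chi+\tilde\chi,\varphi(t))=J_n(\chi+\tilde\chi,\varphi_0)$, providing a natural normalization of $\varphi(t)$ in a weighted $L^1$ sense.

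With this data in hand, the plan is to compare $\varphi(t)$ with a bounded pluripotential solution $\psi$ of an auxiliary Monge-Amp\`ere equation on the big class $[\tilde\chi]$, produced via Eyssidieux-Guedj-Zeriahi. Using $\psi$ as a barrier together with the pluripotential domination principle and the normalization furnished by \eqref{equality-2-17}, one transfers $\|\psi\|_{L^\infty}\leq C$ to a uniform bound $\|\varphi(t)\|_{L^\infty}\leq C$.

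The main obstacle lies in this last step. Unlike the two-dimensional setting of \cite{FangLaiSongWeinkove2014}, where the $J$-equation collapses identically to a complex Monge-Amp\`ere equation and the bounded pluripotential barrier is read off directly from the stationary equation, here the Hessian quotient arising from the generalized $J$-type flow does not reduce to a Monge-Amp\`ere equation in higher dimensions. The hypothesis that $\tilde\chi$ is semipositive and big is precisely what allows one to construct an auxiliary degenerate Monge-Amp\`ere equation on the big class $[\tilde\chi]$, apply Eyssidieux-Guedj-Zeriahi to obtain the bounded pluripotential barrier, and then execute the pluripotential comparison against $\varphi(t)$ to conclude the uniform, $t$-independent $L^\infty$ bound.
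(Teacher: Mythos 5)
Your overall strategy --- compare $\varphi(t)$ with a bounded pluripotential barrier in the spirit of the first method of Fang--Lai--Song--Weinkove --- is the right one, and your Maclaurin step correctly yields a uniform lower bound $(\chi+\tilde\chi+\sqrt{-1}\partial\bar\partial\varphi)^n\geq c_0\,\omega^n$ from \eqref{inequality-2-6}. But the proof as proposed has two genuine gaps, and you have in fact flagged the first one yourself without resolving it.

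First, the barrier. The paper does not construct an auxiliary degenerate Monge--Amp\`ere equation in the big class $[\tilde\chi]$ via Eyssidieux--Guedj--Zeriahi. It takes $\psi$ to be a bounded pluripotential solution of the stationary Hessian-quotient equation \eqref{elliptic-equation} itself, normalized by $\mathrm{ess\,sup}_M\psi=0$ and smooth on $Amp(\tilde\chi)$; the existence of this $\psi$ is the nontrivial input imported from \cite{Sun202210}. An EGZ solution of a Monge--Amp\`ere equation in $[\tilde\chi]$ is not even $(\chi+\tilde\chi)$-psh in the right normalization and, more importantly, satisfies the wrong equation: there is no reason a domination/comparison argument between it and a solution of the Hessian-quotient flow should close. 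You correctly observe that the dimension-two collapse to a Monge--Amp\`ere equation fails here, but the fix is to use the pluripotential theory for the Hessian-quotient equation itself, not to substitute a Monge--Amp\`ere surrogate.

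Second, the comparison mechanism is missing. Invoking ``the pluripotential domination principle together with the normalization \eqref{equality-2-17}'' does not produce the bound: the Monge--Amp\`ere \emph{lower} bound from Maclaurin points the wrong way for Ko{\l}odziej-type $L^\infty$ estimates (those require an $L^p$ \emph{upper} bound on the density), and the conservation of $J_n$ only pins down one energy average, which controls $\sup_M\varphi$ from below and $\inf_M\varphi$ from above but neither quantity in the direction you need. The paper's actual mechanism is a parabolic maximum principle: one studies
\begin{equation*}
\theta_\epsilon:=\varphi-\psi-\epsilon(\psi-\rho+cmt),
\end{equation*}
where $\rho$ has analytic singularities with $\tilde\chi+\sqrt{-1}\partial\bar\partial\rho\geq\delta\chi$ (Boucksom), so that $\theta_\epsilon\to-\infty$ off $Amp(\tilde\chi)$ and any maximum lies in $Amp(\tilde\chi)$. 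At an interior-in-time maximum, $\sqrt{-1}\partial\bar\partial\theta_\epsilon\leq0$ gives $\chi+\tilde\chi+\sqrt{-1}\partial\bar\partial\varphi<(1+\epsilon)(\chi+\tilde\chi+\sqrt{-1}\partial\bar\partial\psi)$, and the monotonicity of $X\mapsto X^n/(X^m\wedge\omega^{n-m})$ together with the flow equation forces $\partial_t\theta_\epsilon<0$, a contradiction; hence the maximum is at $t=0$ and letting $\epsilon\to0$ gives the upper bound, with the lower bound obtained symmetrically. The normalization is supplied by the initial data at $t=0$, not by $J_n$. Without this (or an equivalent) mechanism your argument does not yield a $t$-independent bound.
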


\begin{proof}

According to \cite{Sun202210}, there is a pluripotential solution $\psi$ to Equation~\eqref{elliptic-equation} with $ess\sup_M \psi = 0$. 
The solution $\psi$ is uniformly bounded and smooth in $Amp (\tilde \chi)$. 
There is a function $\rho$ which is smooth in $Amp (\tilde\chi)$ with analytic singularities 
and $\tilde\chi + \sqrt{-1}\partial\bar\partial\rho \geq \delta \chi $ for some $\delta > 0$. 
The existence of $\rho$ is guaranteed by the work of Boucksom~\cite{Boucksom2004}.

We shall consider the following function: for $\epsilon \in \left(0,\frac{1}{2}\right)$,
\begin{equation*}
	\theta_{\epsilon} := \varphi - \psi - \epsilon (\psi - \rho + cm t) .
\end{equation*}
Function $\theta_\epsilon$ is smooth in $Amp (\tilde \chi)$, and approaches $-\infty$ along $M \setminus Amp (\tilde \chi)$.  
For a fixed time period of $T > 0$,  $\theta_\epsilon$ must reach its maximal value at some point $(z_{max},t_{max}) \in Amp(\tilde \chi) \times [0,T]$. If point $(z_{max},t_{max}) \in Amp(\tilde \chi) \times (0,T]$,
\begin{equation}
\label{inequality-3-5}
	\frac{\partial \theta_\epsilon}{\partial t} \geq 0 , \qquad \text{ and } \qquad  \sqrt{-1} \partial\bar\partial \theta_\epsilon \leq 0 .
\end{equation}
From the second inequality in \eqref{inequality-3-5}, we reduce that at $(z_{max},t_{max})$
\begin{equation*}
\begin{aligned}
	&\quad \chi + \tilde \chi + \sqrt{-1} \partial\bar\partial \varphi \\
	&= (1 + \epsilon) (\chi + \tilde \chi + \sqrt{-1} \partial\bar\partial \psi) - \epsilon (\chi + \tilde \chi + \sqrt{-1}\partial\bar\partial \rho) + \sqrt{-1} \partial\bar\partial \theta_\epsilon  \\
	&< (1 + \epsilon) (\chi + \tilde \chi + \sqrt{-1} \partial\bar\partial \psi) ,
\end{aligned}
\end{equation*}
and hence 
\begin{equation}
\label{inequality-3-7}
\begin{aligned}
	\quad \frac{(\chi + \tilde \chi + \sqrt{-1} \partial\bar\partial \varphi)^n}{ n (\chi + \tilde \chi + \sqrt{-1} \partial\bar\partial \varphi)^m \wedge \omega^{n - m}} 
	&<
	\frac{ (1 + \epsilon)^n (\chi + \tilde \chi + \sqrt{-1} \partial\bar\partial\psi)^n}{(1 + \epsilon)^m n (\chi + \tilde \chi + \sqrt{-1} \partial\bar\partial \psi)^m \wedge \omega^{n - m}} \\
	&= 
	 \frac{(1 + \epsilon)^{n - m}}{c} .
\end{aligned}
\end{equation}
Substituting the first inequality in \eqref{inequality-3-5} and \eqref{inequality-3-7} into Equation~\eqref{equation-3-1}, 
\begin{equation*}
\begin{aligned}
	\frac{\partial \theta_\epsilon}{\partial t}
	&= \frac{\partial \varphi}{\partial t} - \epsilon cm \\
	&= c - \frac{n (\chi + \tilde \chi + \sqrt{-1} \partial\bar\partial\varphi)^{m } \wedge \omega^{n - m}}{(\chi + \tilde \chi + \sqrt{-1} \partial\bar\partial\varphi)^n} - \epsilon cm \\
	&< c - \frac{c}{(1 + \epsilon)^m} - \epsilon cm \\
	&< 0 ,
\end{aligned}
\end{equation*}
which is a contradiction!
Now we can conclude that $z_{max} \in Amp(\tilde \chi)$ and $t_{max} = 0$. 
As a result,
\begin{equation}
\label{inequality-3-9}
\begin{aligned}
	\varphi (z,t)
	&\leq 
	 \varphi(z_{max},0) - (1 + \epsilon) \psi (z_{max}) + \epsilon \rho (z_{max}) + (1 + \epsilon) \psi (z) - \epsilon \rho  (z) + \epsilon cm t \\
	 &\leq 
	 \varphi(z_{max},0) - (1 + \epsilon) \psi (z_{max}) - \epsilon \rho  (z) + \epsilon cm t 
	 .
\end{aligned}
\end{equation}
Letting $\epsilon \to 0 +$ in \eqref{inequality-3-9},
\begin{equation*}
	\varphi (z,t) \leq \varphi (z_{max},0) - \psi (z_{max}) \leq \max_M \varphi_0 + \Vert \psi \Vert_{L^\infty}.
\end{equation*}
Similarly, we also obtain
\begin{equation*}
	\varphi (z,t) \geq \varphi (z_{min},0) - \psi (z_{min}) \geq \min_M \varphi_0 .
\end{equation*}

\end{proof}


\medskip

\section{Second order estimate}

In this section, we shall prove partial second order estimates of admissible solutions to Equation~\eqref{equation-3-1} following the argument in \cite{FangLaiMa2011}\cite{Sun2015p}, which include uniform estimate in $Amp(\tilde \chi)$ and global estimate dependent on $t$. To well understand  the dependence of constants and coefficients in two cases, we provide the proofs in details here.

\medskip

\subsection{Uniform second order estimate}
\label{uniform-C2}

We shall consider the function
\begin{equation}
	\ln w - A (\varphi - \rho), \qquad \text{ where } w := S_1 (\chi + \tilde\chi + \sqrt{-1} \partial\bar\partial \varphi).
\end{equation}
Since $\rho$ approaches $-\infty$ along $M \setminus Amp (\tilde \chi)$, function $\ln w - A (\varphi - \rho)$ can reach its maximal value on $M \times [0,t']$, for a fixed time $0 < t' < T$. 
Suppose that $\ln w + A (\varphi - \rho)$ achieves its maximum at point $(z_{max},t_{max}) \in Amp(\tilde \chi) \times (0,t']$. We choose a local chart near $z_{max}$ such that $\omega_{i\bar j} = \delta_{ij}$ and $X_{i\bar j}$ is diagonal at $z_{max}$ when $t = t_{max}$. 
Therefore, at $(z_{max},t_{max})$
\begin{equation}
\label{inequality-3-19}
	\frac{\partial_t w}{w} - A \partial_t \varphi \geq 0,
\end{equation}
\begin{equation}
\label{equation-3-20}
	\frac{\partial_l w}{w} - A (\varphi_l - \rho_l) = 0,
\end{equation}
and
\begin{equation}
\label{inequality-3-21}
	\frac{\bar\partial_l \partial_l w}{w} - \frac{\bar\partial_l w \partial_l w}{w^2} - A (\varphi_{l\bar l} - \rho_{l\bar l}) \leq 0 .
\end{equation}
From \eqref{inequality-3-21}, we obtain
\begin{equation}
\label{inequality-3-22}
\begin{aligned}
	0	
	&\geq
	\frac{1}{w} \sum_i (X_{l\bar li\bar i} - R_{i\bar il\bar l} X_{l\bar l} + R_{l\bar li\bar i} X_{i\bar i} + G_{l\bar li\bar i}) - \frac{\bar\partial_l w \partial_l w}{w^2} - A(\varphi_{l\bar l} - \rho_{l\bar l}) ,
\end{aligned}
\end{equation}
where
\begin{equation*}
G_{l\bar li\bar i} 
=
(\chi_{i\bar il\bar l} + \tilde \chi_{i\bar il\bar l}) - (\chi_{l\bar li\bar i}  + \tilde\chi_{l\bar li\bar i} )+ \sum_k R_{i\bar il\bar k} (\chi_{k\bar l}  + \tilde \chi_{k\bar l} )- \sum_k R_{l\bar li\bar k} (\chi_{k\bar i}  + \tilde \chi_{k\bar i} ) .
\end{equation*}
Multiplying \eqref{inequality-3-22} by $S_{n - m - 1;l} (X^{-1}) (X^{l\bar l})^2$ and summing them up along with index $l$,
\begin{equation}
\label{inequality-3-24}
\begin{aligned}
	0 
	&\geq
	\frac{1}{w} \sum_{i,l} S_{n - m - 1;l} (X^{-1}) (X^{l\bar l})^2 X_{l\bar li\bar i} - \frac{1}{w} \sum_{i,l} S_{n - m - 1;l} (X^{-1}) X^{l\bar l} R_{i\bar il\bar l} \\
	&\qquad + \frac{1}{w} \sum_{i,l} S_{n - m - 1;l} (X^{-1}) (X^{l\bar l})^2 R_{l\bar li\bar i} X_{i\bar i} + \frac{1}{w} \sum_{i,l} S_{n - m - 1;l} (X^{-1}) (X^{l\bar l})^2 G_{l\bar li\bar i} \\
	&\qquad - \frac{1}{w^2} \sum_l S_{n - m - 1;l} (X^{-1}) (X^{l\bar l})^2 \partial_l w\bar\partial_l w \\
	&\qquad - A \sum_l S_{n - m - 1;l} (X^{-1}) (X^{l\bar l})^2 (\varphi_{l\bar l} - \rho_{l\bar l}) 
	.
\end{aligned}
\end{equation}
Substituting \eqref{inequality-3-15} and \eqref{inequality-3-19} into \eqref{inequality-3-24},
\begin{equation}
\label{inequality-3-25}
\begin{aligned}
	0 &\geq
		 \frac{1}{w}\sum_{i,j,l} S_{n - m -1;i} (X^{-1}) (X^{i\bar i})^2 X^{j\bar j} X_{j\bar i\bar l} X_{i\bar jl} + \frac{C^m_n}{n w} \sum_l \partial_t \varphi_{l\bar l}  \\
		&\qquad - \frac{1}{w} \sum_{i,l} S_{n - m - 1;l} (X^{-1}) X^{l\bar l} R_{i\bar il\bar l}  + \frac{1}{w} \sum_{i,l} S_{n - m - 1;l} (X^{-1}) (X^{l\bar l})^2 R_{l\bar li\bar i} X_{i\bar i}  \\
		&\qquad + \frac{1}{w} \sum_{i,l} S_{n - m - 1;l} (X^{-1}) (X^{l\bar l})^2 G_{l\bar li\bar i}  - \frac{1}{w^2} \sum_l S_{n - m - 1;l} (X^{-1}) (X^{l\bar l})^2 \partial_l w\bar\partial_l w \\
		&\qquad - A \sum_l S_{n - m - 1;l} (X^{-1}) (X^{l\bar l})^2 (\varphi_{l\bar l} - \rho_{l\bar l}) \\
		&\geq
		\frac{1}{w}\sum_{i,j,l} S_{n - m -1;i} (X^{-1}) (X^{i\bar i})^2 X^{j\bar j} X_{j\bar i\bar l} X_{i\bar jl}  +  A \frac{C^m_n}{n }  \partial_t \varphi  \\
		&\qquad - \frac{1}{w} \sum_{i,l} S_{n - m - 1;l} (X^{-1}) X^{l\bar l} R_{i\bar il\bar l}  + \frac{1}{w} \sum_{i,l} S_{n - m - 1;l} (X^{-1}) (X^{l\bar l})^2 R_{l\bar li\bar i} X_{i\bar i}  \\
		&\qquad + \frac{1}{w} \sum_{i,l} S_{n - m - 1;l} (X^{-1}) (X^{l\bar l})^2 G_{l\bar li\bar i}  - \frac{1}{w^2} \sum_l S_{n - m - 1;l} (X^{-1}) (X^{l\bar l})^2 \partial_l w\bar\partial_l w \\
		&\qquad - A \sum_l S_{n - m - 1;l} (X^{-1}) (X^{l\bar l})^2 (\varphi_{l\bar l} - \rho_{l\bar l}) 
		.		
\end{aligned}
\end{equation}
Since
\begin{equation*}
	X_{i\bar jl} X_{j\bar i\bar l} - 2 \mathfrak{Re} \left\{X_{i\bar jl} X_{l\bar j} \frac{\bar\partial_i w}{w}\right\} + X_{j\bar l} X_{l\bar j} \frac{\partial_i w \bar\partial_i w}{w^2} \geq 0,
\end{equation*}
we obtain that
\begin{equation}
\label{inequality-3-27}
\begin{aligned}
	0
	&\leq
	\frac{1}{w} \sum_{i,j,l} S_{n - m -1;i} (X^{-1}) (X^{i\bar i})^2 X^{j\bar j} X_{j\bar i\bar l} X_{i\bar jl} \\	
	&\qquad - \frac{1}{w^2} \sum_{i} S_{n - m -1;i} (X^{-1}) (X^{i\bar i})^2    \partial_i w   \bar\partial_i w   
	.
\end{aligned}
\end{equation}
Substituting \eqref{inequality-3-27} into \eqref{inequality-3-25},
\begin{equation}
\label{inequality-3-28}
\begin{aligned}
	0
	&\geq 
	  A  \left(\frac{C^m_n c}{n} -   S_{n - m} (X^{-1})\right) + \frac{1}{w} \sum_{i,l} S_{n - m - 1;l} (X^{-1}) (X^{l\bar l})^2 G_{l\bar li\bar i} \\
		&\qquad - \frac{1}{w} \sum_{i,l} S_{n - m - 1;l} (X^{-1}) X^{l\bar l} R_{i\bar il\bar l}  + \frac{1}{w} \sum_{i,l} S_{n - m - 1;l} (X^{-1}) (X^{l\bar l})^2 R_{l\bar li\bar i} X_{i\bar i}  \\
		&\qquad - A \sum_l S_{n - m - 1;l} (X^{-1}) (X^{l\bar l})^2 (\varphi_{l\bar l} - \rho_{l\bar l}) 
		.
\end{aligned}
\end{equation}

Now we need to estimate the positive term in \eqref{inequality-3-28}, following the argument of \cite{FangLaiMa2011}.
\begin{lemma}
\label{lemma-3-1}
Let $\varphi \in C^2(M)$ satisfy $\chi + \tilde \chi + \sqrt{-1} \partial\bar\partial \varphi > 0$. There are positive constants $N$ and $\theta$ such that when $w > N$ at a point $z \in Amp (\tilde \chi)$,
\begin{equation}
\begin{aligned}
	&\quad \sum_i S_{n - m - 1;i} (X^{-1}) (X^{i\bar i})^2 (\varphi_{i\bar i} - \rho_{i\bar i}) \\
	&\leq \frac{C^m_n c}{n} -  S_{n - m} (X^{-1}) - \theta - \theta \sum_i S_{n - m - 1;i} (X^{-1}) (X^{i\bar i})^2 
	,
\end{aligned}
\end{equation}
under coordinates around $z$ such that $\omega_{i\bar  j} = \delta_{ij}$ and $X_{i\bar j}$ is diagonal at $z$.

\end{lemma}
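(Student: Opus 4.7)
The plan is to use the Boucksom slack $\tilde\chi + \sqrt{-1}\partial\bar\partial\rho \ge \delta\chi$ to convert the boundary cone condition into an effectively strict one, and then to exploit a uniform pointwise lower bound on $\chi_{i\bar i}$ derived from the cone condition itself. First I would use $\varphi_{i\bar i}-\rho_{i\bar i}=X_{i\bar i}-\chi_{i\bar i}-(\tilde\chi_{i\bar i}+\rho_{i\bar i}) \le X_{i\bar i}-(1+\delta)\chi_{i\bar i}$ together with the elementary identities $(X^{i\bar i})^2 X_{i\bar i}=X^{i\bar i}$ and $\sum_i S_{n-m-1;i}(X^{-1})X^{i\bar i}=(n-m) S_{n-m}(X^{-1})$ to reduce the claim to showing that, for some $\theta, N>0$ and every admissible $X$ with $w>N$,
\[
(n-m+1)S_{n-m}(X^{-1})+\theta+\theta\sum_i S_{n-m-1;i}(X^{-1})(X^{i\bar i})^2 \le \frac{C^m_n c}{n}+(1+\delta)\sum_i S_{n-m-1;i}(X^{-1})(X^{i\bar i})^2 \chi_{i\bar i}.
\]

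Next I would translate the boundary cone condition into pointwise information on $\chi$. In a basis simultaneously diagonalizing $\chi$ and $\omega$ with eigenvalues $\lambda_1,\ldots,\lambda_n$, the condition $c\chi^{n-1}-m\chi^{m-1}\wedge\omega^{n-m}\ge 0$ is equivalent to $\frac{C^m_n c}{n}\ge S_{n-m}(\chi_K^{-1})$ for every $(n-1)$-subset $K$ of indices. Pulling a single factor $1/\lambda_p$ with $p\in K$ out of this inequality and using the compactness upper bound on the $\lambda_i$ yields a uniform positive lower bound $\lambda_{\min}(\chi)\ge c_\chi>0$, so that $\chi_{i\bar i}\ge c_\chi$ in every orthonormal frame, in particular the one diagonalizing $X$. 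Inserting this into the reduced inequality and choosing $\theta<(1+\delta)c_\chi$ turns the claim into a polynomial inequality in the variables $\nu_i=1/\mu_i$: when $w$ is large at least one $\mu_p$ is large, so the index-$p$ contributions to each side are of order $1/\mu_p$ and negligible, while on the remaining directions the quadratic dependence of $\sum_i S_{n-m-1;i}(X^{-1})(X^{i\bar i})^2$ on the $\nu_i$ strictly dominates the linear dependence of $S_{n-m}(X^{-1})$, by Newton-MacLaurin comparisons, from which the constants $\theta$ and $N$ are extracted.

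The main obstacle is the mismatch between the $X$-frame used in the computation and the $\chi$-frame in which the cone condition is cleanest: the diagonal entries $\chi_{i\bar i}$ in the $X$-frame are not eigenvalues of $\chi$, so only the basis-invariant bound $\chi_{i\bar i}\ge\lambda_{\min}(\chi)$ is directly available, and the cone condition must be used through its reformulation in terms of $S_{n-m}(\chi_K^{-1})$ in the $\chi$-frame. Establishing the final polynomial inequality uniformly over all eigenvalue tuples $(\mu_1,\ldots,\mu_n)$ of $X$ and over all points $z\in Amp(\tilde\chi)$ requires careful use of the Newton-MacLaurin inequalities and of the strict $\delta$-margin produced by the Boucksom slack.
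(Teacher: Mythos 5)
Your opening reduction is correct and coincides with the paper's first step: using the Boucksom slack $\tilde\chi+\sqrt{-1}\partial\bar\partial\rho\ge\delta\chi$ to get $\varphi_{i\bar i}-\rho_{i\bar i}\le X_{i\bar i}-(1+\delta)\chi_{i\bar i}$, together with $\sum_iS_{n-m-1;i}(X^{-1})X^{i\bar i}=(n-m)S_{n-m}(X^{-1})$. The gap is in how you then use the cone condition. You discard the individual $\chi_{i\bar i}$ in favour of the scalar bound $\chi_{i\bar i}\ge c_\chi=\lambda_{\min}(\chi)$ and propose to close the argument by a ``quadratic dominates linear'' comparison. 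That domination is only available when $S_1(X^{-1})$ is large (the paper's first case, where a Newton--MacLaurin type inequality does apply). In the complementary regime where $w$ is large because one eigenvalue $\mu_1$ of $X$ blows up while the remaining $\mu_i$ stay of order one, both $S_{n-m}(X^{-1})$ and the quadratic term are $O(1)$, and the reduced inequality is a genuine quantitative competition decided by the full cone condition. After the replacement $\chi_{i\bar i}\mapsto c_\chi$ it is in fact false. Concretely, take $n=3$, $m=2$, so $\frac{C^m_nc}{n}=c$, $S_{n-m}(X^{-1})=\sum_i\nu_i$ with $\nu_i=1/\mu_i$, and $S_{n-m-1;i}\equiv1$; in the limit $\nu_1\to0$ your reduced claim becomes
\begin{equation*}
2(\nu_2+\nu_3)+\theta+\theta(\nu_2^2+\nu_3^2)\le c+(1+\delta)\,c_\chi\,(\nu_2^2+\nu_3^2).
\end{equation*}
Minimizing the right side minus the left over $\nu_2=\nu_3$ gives, up to $O(\theta)$, the value $c-\frac{2}{(1+\delta)c_\chi}$, attained at $\nu_2=\nu_3=\frac{1}{(1+\delta)c_\chi}$. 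The boundary cone condition here reads $\lambda_i^{-1}+\lambda_j^{-1}\le c$ for $i\ne j$; the choice $\lambda_1=\frac{1}{c-\epsilon}$, $\lambda_2=\lambda_3=\frac{1}{\epsilon}$ satisfies it with $c_\chi=\frac{1}{c-\epsilon}$, so the minimum equals $c-\frac{2(c-\epsilon)}{1+\delta}<0$ for $\delta,\epsilon$ small. Such $X$ (with $\mu_1$, hence $w$, arbitrarily large) are admissible for the lemma, so no choice of $N$ and $\theta$ rescues this route.

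The information you threw away is exactly what makes the true statement hold: the cone condition constrains the $\lambda_i$ jointly through $\sum_{i\in K}\lambda_i^{-1}$ over $(n-1)$-subsets $K$, so the direction where $\chi$ is smallest can be dangerous in only one of the remaining directions, not in all of them simultaneously as the bound $\chi_{i\bar i}\ge\lambda_{\min}$ pretends. The paper retains this by keeping the matrix quantity $\sum_iS_{n-m-1;i}(X^{-1})(X^{i\bar i})^2\chi_{i\bar i}$ and introducing the auxiliary form $\tilde X=\left(1+\frac{\delta}{2}\right)\chi+X_{1\bar 1}\sqrt{-1}\,dz^1\wedge d\bar z^1$: the boundary cone condition applied to $\tilde X$, boosted by $c\,\hat\chi^n\ge n\left(\frac{\delta}{4}\right)^{n-1}c\,X_{1\bar 1}\,\chi^{n-1}\wedge\sqrt{-1}\,dz^1\wedge d\bar z^1$ once $X_{1\bar 1}$ is large, yields the strict bound $S_{n-m}(\tilde X^{-1})<\left(1+\frac{\delta}{4}\right)^{-(n-m)}\frac{C^m_nc}{n}$, and it is this strict gap---not a lower bound on $\lambda_{\min}(\chi)$---that produces $\theta$. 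To repair your argument you would need to keep the $\chi_{i\bar i}$ (or an equivalent G{\aa}rding-type comparison between $X$ and $\chi$) and apply the cone condition to a perturbation of $\chi$ that sees the large eigenvalue of $X$, which is precisely the paper's mechanism.
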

\begin{proof}

Without loss of generality, we may assume that $\chi \geq \kappa \omega$ for $\kappa > 0$, and $X_{1\bar 1} \geq \cdots \geq X_{n\bar n}$. We calculate that
\begin{equation}
\label{inequality-3-30}
\begin{aligned}
	&\quad \sum_i S_{n - m - 1;i} (X^{-1}) (X^{i\bar i})^2 (\varphi_{i\bar i} - \rho_{i\bar i}) \\
	&\leq \sum_i S_{n - m - 1;i} (X^{-1}) X^{i\bar i} - (1 + \delta) \sum_i S_{n - m - 1;i} (X^{-1}) (X^{i\bar i})^2 \chi_{i\bar i}  \\
	&\leq (n - m) S_{n - m} (X^{-1}) - (1 + \delta) \kappa \sum_i S_{n - m - 1;i} (X^{-1}) (X^{i\bar i})^2 \\
	&\leq 
	(n - m) \left(1 - \frac{(1 + \delta) \kappa}{2 n} S_1 (X^{-1})\right)S_{n - m} (X^{-1}) \\
	&\qquad - \frac{(1 + \delta) \kappa}{2} \sum_i S_{n - m - 1;i} (X^{-1}) (X^{i\bar i})^2
	.
\end{aligned}
\end{equation}
If $S_1 (X^{-1}) \geq \frac{2n (n - m + 1)}{(n - m) (1 + \delta) \kappa}$, it is from \eqref{inequality-3-30} that
\begin{equation}
\begin{aligned}
	&\quad \sum_i S_{n - m - 1;i} (X^{-1}) (X^{i\bar i})^2 (\varphi_{i\bar i} - \rho_{i\bar i}) \\
	&\leq - S_{n - m} (X^{-1}) - \frac{(1 + \delta) \kappa}{2} \sum_i S_{n - m - 1;i} (X^{-1}) (X^{i\bar i})^2
	.
\end{aligned}
\end{equation}
If $S_1 (X^{-1}) < \frac{2n (n - m + 1)}{(n - m) (1 + \delta) \kappa}$, then
\begin{equation}
	X_{1\bar 1} \geq \cdots \geq X_{n\bar n} > \frac{1}{S_1 (X^{-1})} > \frac{(n - m) (1 + \delta) \kappa}{2n (n - m + 1)} .
\end{equation}
So we have
\begin{equation}
\begin{aligned}
	&\quad \sum_i S_{n - m - 1;i} (X^{-1}) (X^{i\bar i})^2 (\varphi_{i\bar i} - \rho_{i\bar i}) \\
	&\leq \sum_i S_{n - m - 1;i} (X^{-1}) X^{i\bar i} - (1 + \delta) \sum_i S_{n - m - 1;i} (X^{-1}) (X^{i\bar i})^2 \chi_{i\bar i} \\
	&\leq - S_{n - m} (X^{-1}) + S_{n - m} (\tilde X^{-1}) - \frac{\delta}{2} \sum_i S_{n - m - 1;i} (X^{-1}) (X^{i\bar i})^2 \chi_{i\bar i} \\
	&\qquad +   S_{n - m - 1;1} (X^{-1}) X^{1\bar 1} \\
	&\leq - S_{n - m} (X^{-1}) + S_{n - m} (\tilde X^{-1}) - \frac{\delta \kappa}{2} \sum_i S_{n - m - 1;i} (X^{-1}) (X^{i\bar i})^2 \\
		&\qquad +  S_{n - m - 1;1} (X^{-1}) X^{1\bar 1} 
	,
\end{aligned}
\end{equation}
where
\begin{equation}
\tilde X := \left(1 + \frac{\delta}{2}\right) \chi + X_{1\bar 1} \sqrt{-1}  dz^1 \wedge d\bar z^1 .
\end{equation}
In this proof, we define
\begin{equation}
\hat \chi:= \frac{\delta}{4} \chi + X_{1\bar 1} \sqrt{-1} dz^1 \wedge d\bar z^1 .
\end{equation}
Since $\chi$ satisfies the boundary case of cone condition, we obtain that 
\begin{equation}
\label{inequality-3-36}
\begin{aligned}
	&\quad c \tilde X^n - \left(1 + \frac{\delta}{4}\right)^{n - m} n \tilde X^m \wedge \omega^{n - m} \\
	&\geq \left(1 + \frac{\delta}{4}\right)^n \left( c  \frac{\hat \chi^n}{(1 + \frac{\delta}{4})^n} + c \chi^n - n\chi^m \wedge \omega^{n - m} \right) \\
	&> 
	c n \frac{\delta^{n - 1}}{4^{n - 1}} X_{1\bar 1} \sqrt{-1} dz^1 \wedge d\bar z^1 \wedge \chi^{n - 1} -  \left(1 + \frac{\delta}{4}\right)^n n \chi^m \wedge \omega^{n - m} \\
	&\geq 
	c n \frac{\delta^{n - 1} \kappa^{n - 1}}{4^{n - 1}} X_{1\bar 1}  \sqrt{-1} dz^1 \wedge d\bar z^1 \wedge \omega^{n - 1} -  \left(1 + \frac{\delta}{4}\right)^n n \chi^m \wedge \omega^{n - m} \\
	&\geq 0
	,
\end{aligned}
\end{equation}
if $X_{1\bar 1} \geq \left(1 + \frac{\delta}{4}\right)^n \frac{4^{n - 1} n}{c \delta^{n - 1} \kappa^{n - 1}}\frac{S_m (\chi)}{C^m_n}$.
Then from \eqref{inequality-3-36},
\begin{equation}
\begin{aligned}
	&\quad S_{n - m} (\tilde X^{-1}) +   S_{n - m - 1;1} (X^{-1}) X^{1\bar 1} \\
	&< \frac{1}{(1 + \frac{\delta}{4})^{n - m}} \frac{C^m_n c}{n} + C^{n - m - 1}_{n - 1} \left(\frac{2 n (n - m + 1)}{(n - m) (1 + \delta) \kappa}\right)^{n - m - 1} X^{1\bar 1} \\
	&< 
	\frac{C^m_n c}{n} \frac{1}{\left(1 + \frac{\delta}{4}\right)^{n - m -1} \left(1 + \frac{\delta}{8}\right)} ,
\end{aligned}
\end{equation}
if
\begin{equation}
\begin{aligned}
	X_{1\bar 1} &> \left(1 + \frac{\delta}{4}\right)^n \frac{4^{n - 1} n}{c \delta^{n - 1} \kappa^{n - 1}}\frac{S_m (\chi)}{C^m_n} \\
	&\qquad + \frac{n - m}{c} \left(\frac{2n (n - m + 1)}{(n - m) (1 + \delta) \kappa}\right)^{n - m - 1} \frac{8 \left(1 + \frac{\delta}{4}\right)^{n - m}}{\delta} 
	.
\end{aligned}
\end{equation}
The proof is complete.

\end{proof}

Applying Lemma~\ref{lemma-3-1} to \eqref{inequality-3-28},
\begin{equation}
\label{inequality-3-39}
\begin{aligned}
	0
	&\geq 
	  A  \left(\frac{C^m_n c}{n} -   S_{n - m} (X^{-1})\right) + \frac{1}{w} \sum_{i,l} S_{n - m - 1;l} (X^{-1}) (X^{l\bar l})^2 G_{l\bar li\bar i}   \\
		&\qquad - \frac{1}{w} \sum_{i,l} S_{n - m - 1;l} (X^{-1}) X^{l\bar l} R_{i\bar il\bar l}  + \frac{1}{w} \sum_{i,l} S_{n - m - 1;l} (X^{-1}) (X^{l\bar l})^2 R_{l\bar li\bar i} X_{i\bar i}  \\
		&\qquad  - A \left( \frac{C^m_n c}{n} -  S_{n - m} (X^{-1}) - \theta - \theta \sum_i S_{n - m - 1;i} (X^{-1}) (X^{i\bar i})^2 \right) \\
		&\geq A \theta \left(1 +   \sum_i S_{n - m - 1;i} (X^{-1}) (X^{i\bar i})^2 \right)  + \frac{\inf_{i,l} G_{l\bar li\bar i}}{w} \sum_{l} S_{n - m - 1;l} (X^{-1}) (X^{l\bar l})^2 \\
		&\qquad  - \frac{n \sup_{i,l} |R_{i\bar il\bar l}|}{w} \sum_{l} S_{n - m - 1;l} (X^{-1}) X^{l\bar l }  -  \sup_{i,l} |R_{i\bar il\bar l}| \sum_{l} S_{n - m - 1;l} (X^{-1}) (X^{l\bar l })^2 \\
		&\geq A \theta \left(1 +   \sum_i S_{n - m - 1;i} (X^{-1}) (X^{i\bar i})^2 \right)  + \frac{\inf_{i,l} G_{l\bar li\bar i}}{w} \sum_{i} S_{n - m - 1;i} (X^{-1}) (X^{i\bar i})^2 \\
				&\qquad  - (n+1)  \sup_{i,l} |R_{i\bar il\bar l}|   \sum_{i} S_{n - m - 1;i} (X^{-1}) (X^{i\bar i})^2  
		,
\end{aligned}
\end{equation}
when $w > N + 1$ at $(z_{max},t_{max})$.  We can pick coefficient $A$ satisfying
\begin{equation}
	A \theta > - \inf_{i,l} G_{l\bar li\bar i} + (n+1) \sup_{i,l} |R_{i\bar il\bar l}|, 
\end{equation}
and hence see a contradiction in \eqref{inequality-3-39}.

If function $\ln w - A (\varphi - \rho)$ reaches its maximal value in $M \times (0,t']$,
\begin{equation*}
	\ln w - A (\varphi - \rho) \leq \ln w - A (\varphi - \rho) \Big|_{(z_{max},t_{max})} 
	\leq   \ln (N + 1) - A \min_M \varphi_0
	,
\end{equation*}
and hence we obtain
\begin{equation*}
	\ln w 
	\leq
	\ln ( N + 1) + A \left(\varphi - \min_M \varphi_0 - \rho \right) 
	\leq
	\ln (N + 1) + A \left(osc_M \varphi_0 + \Vert \psi\Vert_{L^\infty} - \rho  \right)  
	.
\end{equation*}
Otherwise, 
\begin{equation*}
	\ln w  - A (\varphi - \rho) \leq \ln w - A (\varphi _0 - \rho) \Big|_{z_{max}} 
	\leq \ln (N + 1) - A \min_M \varphi_0,
\end{equation*}
and hence we again obtain
\begin{equation*}
	\ln w 
	\leq \ln (N + 1) + A \left(\varphi - \min_M \varphi_0 - \rho \right) 
	\leq 
	\ln (N + 1) + A\left(osc_M \varphi_0 + \Vert \psi \Vert_{L^\infty}  - \rho  \right) 
	.
\end{equation*}

\begin{theorem}

Let $\varphi \in C^4 (M \times [0,T))$ be an admissible solution to Equation~\eqref{equation-3-1}. Then there are uniform positive constants $C$ and $A$ such that
\(
	w \leq C e^{A (\varphi - \rho)} .
\)

\end{theorem}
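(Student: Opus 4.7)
The plan is to run the parabolic maximum principle on the auxiliary test function $\ln w - A(\varphi - \rho)$ on $M\times[0,t']$ for an arbitrary fixed $t' \in (0,T)$, with $A$ chosen sufficiently large, and then let $t' \to T$. Since $\rho$ is smooth on $Amp(\tilde\chi)$ with analytic singularities tending to $-\infty$ along $M\setminus Amp(\tilde\chi)$, and since $\varphi$ is uniformly bounded by the $C^0$ estimate of the previous section, the auxiliary function descends to $-\infty$ off the ample locus and therefore attains its supremum at some point $(z_{max},t_{max})\in Amp(\tilde\chi)\times[0,t']$.

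I split into two cases. First, suppose $t_{max}>0$. Then the time derivative sign \eqref{inequality-3-19}, the first-order vanishing \eqref{equation-3-20}, and the second-order negativity \eqref{inequality-3-21} all hold at the max. I would contract \eqref{inequality-3-21} against $S_{n-m-1;l}(X^{-1})(X^{l\bar l})^2$ and sum over $l$ to obtain \eqref{inequality-3-24}, then substitute the parabolic commutator identity \eqref{inequality-3-15} and the gradient absorption bound \eqref{inequality-3-27} to reduce to \eqref{inequality-3-28}. The point of all this bookkeeping is to isolate the awkward positive term $A\bigl(C^m_n c/n - S_{n-m}(X^{-1})\bigr)$, against which the only serious weapon available is the boundary cone condition.

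The crux is then Lemma~\ref{lemma-3-1}, which uses \eqref{inequality-1-5} together with the strict positivity $\chi \geq \kappa\omega$ to show that when $w>N$, the expression $\sum_i S_{n-m-1;i}(X^{-1})(X^{i\bar i})^2(\varphi_{i\bar i}-\rho_{i\bar i})$ is bounded above by $C^m_n c/n - S_{n-m}(X^{-1}) - \theta - \theta\sum_i S_{n-m-1;i}(X^{-1})(X^{i\bar i})^2$. Feeding this into \eqref{inequality-3-28} yields \eqref{inequality-3-39}, and choosing $A$ so large that $A\theta > -\inf_{i,l} G_{l\bar l i\bar i} + (n+1)\sup_{i,l}|R_{i\bar il\bar l}|$ produces a contradiction whenever $w(z_{max},t_{max})>N+1$. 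Thus $w\leq N+1$ at the maximum. In the remaining case $t_{max}=0$ the bound is immediate from the initial data. Either way, evaluating at an arbitrary $(z,t)$ and invoking the uniform bound on $\varphi$ from the $C^0$ theorem gives the stated inequality $w\leq C e^{A(\varphi-\rho)}$, with $C$ depending on $\max_M\varphi_0$, $\min_M\varphi_0$, $\|\psi\|_{L^\infty}$, and the background curvature.

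The main obstacle, as anticipated, is the sign of the term $A(C^m_n c/n - S_{n-m}(X^{-1}))$: because the stationary equation saturates $S_{n-m}(X^{-1}) = C^m_n c/n$ at $\psi$, this term has no a priori definite sign along the flow, and without the boundary cone condition there would be nothing to cancel it. The entire design of Lemma~\ref{lemma-3-1}, in particular its case split on the size of $S_1(X^{-1})$ and the introduction of the auxiliary forms $\tilde X$ and $\hat\chi$, is precisely to extract a strictly negative remainder from \eqref{inequality-1-5}; this is where any attempt to weaken the hypothesis would break down.
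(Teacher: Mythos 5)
Your proposal is correct and follows essentially the same route as the paper: the test function $\ln w - A(\varphi-\rho)$, the reduction through \eqref{inequality-3-15}, \eqref{inequality-3-24}, \eqref{inequality-3-27} to \eqref{inequality-3-28}, the application of Lemma~\ref{lemma-3-1} to handle the sign-indefinite term, and the choice $A\theta > -\inf_{i,l}G_{l\bar l i\bar i} + (n+1)\sup_{i,l}|R_{i\bar i l\bar l}|$ are exactly the paper's argument. Your concluding remarks on why the boundary cone condition is indispensable also match the paper's Lemma~\ref{lemma-3-1}.
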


\medskip
\subsection{Global second order estimate}

\begin{lemma}
\label{lemma-5-3}
Let $\varphi \in C^2(M)$ satisfy $\chi + \tilde \chi + \sqrt{-1} \partial\bar\partial \varphi > 0$. There are positive constants $N$ and $\theta$ such that when $w > N$ at a point $z \in M$,
\begin{equation}
\begin{aligned}
	&\quad \sum_i S_{n - m - 1;i} (X^{-1}) (X^{i\bar i})^2 \varphi_{i\bar i}  \\
	&\leq \frac{ 2 C^m_n c}{ n} -  S_{n - m} (X^{-1}) - \theta - \theta \sum_i S_{n - m - 1;i} (X^{-1}) (X^{i\bar i})^2 
	,
\end{aligned}
\end{equation}
under coordinates around $z$ such that $\omega_{i\bar  j} = \delta_{ij}$ and $X_{i\bar j}$ is diagonal at $z$.

\end{lemma}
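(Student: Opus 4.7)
The plan is to mirror the proof of Lemma~\ref{lemma-3-1} in the global setting, effectively setting $\rho=0$ and replacing the strict lower bound $\tilde\chi+\sqrt{-1}\partial\bar\partial\rho\geq\delta\chi$ by the semipositivity $\tilde\chi\geq 0$. Since $\varphi_{i\bar i}=X_{i\bar i}-\chi_{i\bar i}-\tilde\chi_{i\bar i}\leq X_{i\bar i}-\chi_{i\bar i}$, the opening estimate becomes
\[
	\sum_i S_{n-m-1;i}(X^{-1})(X^{i\bar i})^2 \varphi_{i\bar i}\leq (n-m) S_{n-m}(X^{-1})-\sum_i S_{n-m-1;i}(X^{-1})(X^{i\bar i})^2 \chi_{i\bar i},
\]
i.e.\ the analog of the first inequality in the proof of Lemma~\ref{lemma-3-1}, but with the prefactor $(1+\delta)$ on the $\chi_{i\bar i}$ sum replaced by $1$. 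The loss of this $\delta$-slack is exactly what the doubled constant $\frac{2C^m_n c}{n}$ in the statement is designed to absorb.

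I would then perform the same dichotomy on the size of $S_1(X^{-1})$. When $S_1(X^{-1})\geq \tfrac{2n(n-m+1)}{\kappa(n-m)}$, combining $\chi\geq\kappa\omega$ with the Maclaurin-type inequality $\sum_i S_{n-m-1;i}(X^{-1})(X^{i\bar i})^2\geq \tfrac{n-m}{n} S_1(X^{-1}) S_{n-m}(X^{-1})$ produces a bound $\leq -S_{n-m}(X^{-1})-\tfrac{\kappa}{2}\sum_i S_{n-m-1;i}(X^{-1})(X^{i\bar i})^2$, already stronger than claimed. In the opposite range every $X_{i\bar i}$ is uniformly bounded below, while the assumption $w>N$ forces the largest eigenvalue $X_{1\bar 1}$ to be as large as we wish; I then introduce the auxiliary form $\tilde X:=\chi+X_{1\bar 1}\sqrt{-1}dz^1\wedge d\bar z^1$ (without any $(1+\delta/2)\chi$ perturbation). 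Expanding
\[
	\tilde X^n=\chi^n+nX_{1\bar 1}\chi^{n-1}\wedge\sqrt{-1}dz^1\wedge d\bar z^1,\qquad \tilde X^m\wedge\omega^{n-m}=\chi^m\wedge\omega^{n-m}+mX_{1\bar 1}\chi^{m-1}\wedge\omega^{n-m}\wedge\sqrt{-1}dz^1\wedge d\bar z^1,
\]
and wedging the boundary cone condition~\eqref{inequality-1-5} with $X_{1\bar 1}\sqrt{-1}dz^1\wedge d\bar z^1$, I obtain $S_{n-m}(\tilde X^{-1})\leq \tfrac{C^m_n c}{n}+O(1/X_{1\bar 1})$; since $S_{n-m-1;1}(X^{-1})X^{1\bar 1}=O(1/X_{1\bar 1})$ as well, their sum is dominated by $\tfrac{2C^m_n c}{n}$ once $N$ is large enough.

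The hard part will be reproducing the algebraic rearrangement of Lemma~\ref{lemma-3-1} that converts $(n-m) S_{n-m}(X^{-1})-\sum_i S_{n-m-1;i}(X^{-1})(X^{i\bar i})^2\chi_{i\bar i}$ into $-S_{n-m}(X^{-1})+S_{n-m}(\tilde X^{-1})+S_{n-m-1;1}(X^{-1})X^{1\bar 1}$ plus an error that in Lemma~\ref{lemma-3-1} was cancelled by the negative quantity $-\tfrac{\delta\kappa}{2}\sum_i S_{n-m-1;i}(X^{-1})(X^{i\bar i})^2$. With $\delta=0$ this cancellation is unavailable, but in the small-$S_1(X^{-1})$ regime both $S_{n-m}(X^{-1})$ and $\sum_i S_{n-m-1;i}(X^{-1})(X^{i\bar i})^2$ are a priori bounded, so the gap between $\tfrac{C^m_n c}{n}$ and $\tfrac{2C^m_n c}{n}$ is enough to swallow the remainder and still leave a negative tail $-\theta-\theta\sum_i S_{n-m-1;i}(X^{-1})(X^{i\bar i})^2$ for a suitably small $\theta>0$. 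Combining both regimes with uniform choices of $N$ and $\theta$ then yields the stated inequality.
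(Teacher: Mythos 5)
Your proposal is correct and follows essentially the same route as the paper: the same dichotomy on $S_1(X^{-1})$, the same auxiliary form obtained by augmenting $\chi$ in the $dz^1$-direction, and the same use of the boundary cone condition wedged with $X_{1\bar 1}\sqrt{-1}dz^1\wedge d\bar z^1$, with the doubled constant $\frac{2C^m_n c}{n}$ absorbing the loss of the $(1+\delta)$-slack from Lemma~\ref{lemma-3-1}. The only difference is one of bookkeeping in the second regime: the paper still splits $\chi = \left(1-\frac{\delta}{2}\right)\chi + \frac{\delta}{2}\chi$, taking $\tilde X = \left(1-\frac{\delta}{2}\right)\chi + X_{1\bar 1}\sqrt{-1}dz^1\wedge d\bar z^1$ so that the retained term $-\frac{\delta\kappa}{2}\sum_i S_{n-m-1;i}(X^{-1})(X^{i\bar i})^2$ directly supplies the $-\theta\sum$ tail (at the cost of a factor $(1-\delta)^{-(n-m+1)}$, absorbed by the doubling after choosing $\delta$ small), whereas you set $\delta = 0$ and extract the tail from the a priori boundedness of $\sum_i S_{n-m-1;i}(X^{-1})(X^{i\bar i})^2$ in the regime where all eigenvalues of $X$ are bounded below; both work, and the rearrangement you worry about holds for any positive form in place of $(1\pm\frac{\delta}{2})\chi$, so no extra error actually needs to be swallowed.
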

\begin{proof}

Without loss of generality, we may assume that $\chi \geq \kappa \omega$ for $\kappa > 0$, and $X_{1\bar 1} \geq \cdots \geq X_{n\bar n}$. We calculate that
\begin{equation}
\label{inequality-5-31}
\begin{aligned}
	&\quad \sum_i S_{n - m - 1;i} (X^{-1}) (X^{i\bar i})^2 \varphi_{i\bar i}  \\
	&\leq (n - m) S_{n - m} (X^{-1}) -  \kappa \sum_i S_{n - m - 1;i} (X^{-1}) (X^{i\bar i})^2 \\
	&\leq 
	(n - m) S_{n - m} (X^{-1}) - \frac{ \kappa}{2} \frac{n - m}{n} S_{n - m} (X^{-1}) S_1 (X^{-1}) \\
			&\qquad - \frac{ \kappa}{2} \sum_i S_{n - m - 1;i} (X^{-1}) (X^{i\bar i})^2 \\
	&=
	(n - m) \left(1 - \frac{  \kappa}{2 n} S_1 (X^{-1})\right)S_{n - m} (X^{-1}) - \frac{  \kappa}{2} \sum_i S_{n - m - 1;i} (X^{-1}) (X^{i\bar i})^2
	.
\end{aligned}
\end{equation}
If $S_1 (X^{-1}) \geq \frac{2n (n - m + 1)}{(n - m)  \kappa}$, it is  from \eqref{inequality-5-31} that
\begin{equation*}
\begin{aligned}
	\sum_i S_{n - m - 1;i} (X^{-1}) (X^{i\bar i})^2 \varphi_{i\bar i} 
	&\leq - S_{n - m} (X^{-1}) - \frac{ \kappa}{2} \sum_i S_{n - m - 1;i} (X^{-1}) (X^{i\bar i})^2
	.
\end{aligned}
\end{equation*}
If $S_1 (X^{-1}) < \frac{2n (n - m + 1)}{(n - m)  \kappa}$, then
\(
	X_{1\bar 1} \geq \cdots \geq X_{n\bar n} > \frac{1}{S_1 (X^{-1})} > \frac{(n - m)   \kappa}{2n (n - m + 1)} 
\). 
So we have
\begin{equation*}
\begin{aligned}
	&\quad \sum_i S_{n - m - 1;i} (X^{-1}) (X^{i\bar i})^2 \varphi_{i\bar i}  \\
	&\leq - S_{n - m} (X^{-1}) + S_{n - m} (\tilde X^{-1}) - \frac{\delta}{2} \sum_i S_{n - m - 1;i} (X^{-1}) (X^{i\bar i})^2 \chi_{i\bar i} \\
	&\qquad +   S_{n - m - 1;1} (X^{-1}) X^{1\bar 1} \\
	&\leq - S_{n - m} (X^{-1}) + S_{n - m} (\tilde X^{-1}) - \frac{\delta \kappa}{2} \sum_i S_{n - m - 1;i} (X^{-1}) (X^{i\bar i})^2 \\
		&\qquad +  S_{n - m - 1;1} (X^{-1}) X^{1\bar 1} 
	,
\end{aligned}
\end{equation*}
where $\delta > 0$ is to be specified later and
\begin{equation}
\tilde X := \left(1 - \frac{\delta}{2}\right) \chi +  X_{1\bar 1} \sqrt{-1} dz^1 \wedge d\bar z^1 .
\end{equation}
In this proof, we denote
\begin{equation}
\hat \chi:= \frac{\delta}{2} \chi +  X_{1\bar 1}  \sqrt{-1}  dz^1 \wedge d\bar z^1 .
\end{equation}
Since $\chi$ satisfies the boundary case of cone condition, we obtain that 
\begin{equation}
\label{inequality-5-37}
\begin{aligned}
	&\quad c \tilde X^n - \left(1 - \delta \right)^{n - m} n \tilde X^m \wedge \omega^{n - m}	
	\geq 
	c \hat \chi^n +  \left(1 - \delta\right)^n \left(c \chi^n - n \chi^m \wedge \omega^{n - m} \right) 
	> 
	0
	,
\end{aligned}
\end{equation}
if $X_{1\bar 1} \geq \left(1 - \delta\right)^n \frac{2^{n - 1} n}{c \delta^{n - 1} \kappa^{n - 1}}\frac{S_m (\chi)}{C^m_n}$. 
Then from \eqref{inequality-5-37},
\begin{equation}
\begin{aligned}
	&\quad S_{n - m} (\tilde X^{-1}) +   S_{n - m - 1;1} (X^{-1}) X^{1\bar 1} \\
	&< \frac{1}{(1 - \delta)^{n - m}} \frac{C^m_n c}{n} + C^{n - m - 1}_{n - 1} \left(\frac{2 n (n - m + 1)}{(n - m)  \kappa}\right)^{n - m - 1} X^{1\bar 1} \\
	&< 
	\frac{C^m_n c}{n} \frac{1}{\left(1 - \delta\right)^{n - m + 1} } ,
\end{aligned}
\end{equation}
if
\begin{equation}
\begin{aligned}
	X_{1\bar 1} &> \left(1 - \delta\right)^n \frac{2^{n - 1} n}{c \delta^{n - 1} \kappa^{n - 1}}\frac{S_m (\chi)}{C^m_n} \\
	&\qquad + \frac{n - m}{c} \left(\frac{2n (n - m + 1)}{(n - m)  \kappa}\right)^{n - m - 1} \frac{ \left(1 - \delta \right)^{n - m + 1}}{\delta} 
	.
\end{aligned}
\end{equation}
Choosing $\delta > 0$ sufficiently small, the proof is complete.

\end{proof}

We shall consider the function
\begin{equation}
	\ln w -  A \left(\varphi + c t\right) ,
\end{equation}
where $A$ is to be specified later. 
Function $\ln w - A \left(\varphi + ct\right)$ can reach its maximal value on $M \times [0,t']$ for  a fixed time $0 < t' < T$.  
Suppose that the maximal value appears at point $(z_{max},t_{max}) \in M \times (0,t']$.
We are able to pick a local chart around $z_{max}$ such that $\omega_{i\bar j} = \delta_{ij}$ and $X_{i\bar j}$ is diagonal at $z_{max}$ when $t = t_{max}$.
Therefore, at $(z_{max},t_{max})$
\begin{equation}
\label{inequality-5-41}
	\frac{\partial_t w}{w} - A (\partial_t \varphi + c) \geq 0 ,
\end{equation}
\begin{equation}
\label{inequality-5-42}
	\frac{\partial_l w}{w} - A \varphi_l = 0 ,
\end{equation}
and
\begin{equation}
\label{inequality-5-43}
	\frac{\bar\partial_l\partial_l w}{w} - \frac{\bar\partial_l w \partial_l w}{w^2} - A \varphi_{l\bar l} \leq 0 .
\end{equation}
As the argument in \ref{uniform-C2}, we have that
\begin{equation}
\label{inequality-5-47}
\begin{aligned}
	0 &\geq
	A \frac{C^m_n}{n }  ( \partial_t \varphi + c)   - \left(  (n + 1) \sup_{i,l} |R_{i\bar il\bar l}| - \frac{\inf_{i,l} G_{l\bar li\bar i}}{w} \right)\sum_{l}  S_{n - m - 1;l} (X^{-1}) (X^{l\bar l})^2  \\
	&\qquad  + A \left(S_{n - m } (X^{-1}) + \theta + \theta \sum_i S_{n - m - 1;i} (X^{-1}) (X^{i\bar i})^2 - \frac{2 C^m_n c}{n}\right) \\
	&=
- \left(  (n + 1) \sup_{i,l} |R_{i\bar il\bar l}| - \frac{\inf_{i,l} G_{l\bar li\bar i}}{w} \right)\sum_{i}  S_{n - m - 1;i} (X^{-1}) (X^{i\bar i})^2  \\
	&\qquad + A \theta \left(1 + \sum_i S_{n -m - 1;i} (X^{-1}) (X^{i\bar i})^2\right)
	,
\end{aligned}
\end{equation}
when $w > N + 1$ at $(z_{max}, t_{max})$. Choosing $A > \frac{1}{\theta}\left(  (n + 1) \sup_{i,l} |R_{i\bar il\bar l}| - \inf_{i,l} G_{l\bar li\bar i}   \right)$, we find a contradiction in  \eqref{inequality-5-47}.

If function $\ln w - A (\varphi + ct)$ reaches its maximal value in $M \times (0,t']$,
\begin{equation*}
	\ln w - A (\varphi + c t) \leq \ln w - A (\varphi + ct) \Big|_{(z_{max},t_{max})} \leq \ln (N + 1) - A \varphi (z_{max} , t_{max}) ,
\end{equation*}
and hence we obtain
\begin{equation*}
	\ln w 
	\leq 
	\ln (N + 1) + A (\varphi - \min_M \varphi_0 + ct)
	\leq 
	\ln (N + 1) + A \left(osc_M \varphi_0 + \Vert \psi\Vert_{L^\infty}  + ct \right) 
	.
\end{equation*}
Otherwise, 
\begin{equation*}
	\ln w  - A (\varphi + ct) \leq \ln w - A \varphi _0  \Big|_{z_{max}} \leq \ln (N + 1) - A \varphi_0 (z_{max} ) ,
\end{equation*}
and hence we again obtain
\begin{equation*}
	\ln w 
		\leq 
		\ln (N + 1) + A (\varphi - \min_M \varphi_0 + ct)
	\leq 
	\ln (N + 1) + A\left(osc_M \varphi_0 + \Vert \psi \Vert_{L^\infty}  + ct  \right)  
	.
\end{equation*}

\begin{theorem}
\label{theorem-5-4}
Let $\varphi \in C^4 (M \times [0,T))$ be an admissible solution to Equation~\eqref{equation-3-1}. Then there are uniform positive constants $C$ and $A$ such that
\(
	w \leq C e^{A(\varphi + c t)} .
\)

\end{theorem}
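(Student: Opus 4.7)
The plan is to apply the maximum principle to the auxiliary function
$$F := \ln w - A(\varphi + ct),$$
where $A > 0$ is a constant to be chosen later. On the compact set $M \times [0, t']$ for any $t' \in (0, T)$, $F$ attains its maximum at some point $(z_{max}, t_{max})$. If $t_{max} = 0$ the bound follows directly from the initial data, so assume $t_{max} > 0$. Choose a local chart at $z_{max}$ in which $\omega_{i\bar j} = \delta_{ij}$ and $X_{i\bar j}$ is diagonal at time $t_{max}$; then the first- and second-order conditions \eqref{inequality-5-41}--\eqref{inequality-5-43} hold.

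Following the scheme of Section \ref{uniform-C2}, I would multiply \eqref{inequality-5-43} by the weights $S_{n-m-1;l}(X^{-1})(X^{l\bar l})^2$ and sum over $l$. Using \eqref{inequality-3-15} to substitute for $\partial_t \varphi_{l\bar l}$ and \eqref{inequality-5-41} to eliminate $\partial_t w$, one gets a differential inequality whose gradient-of-$w$ terms are absorbed via the Cauchy-Schwarz trick \eqref{inequality-3-27}. This produces an inequality of the shape \eqref{inequality-5-47}, whose remaining positive contribution is the weighted trace $\sum_i S_{n-m-1;i}(X^{-1})(X^{i\bar i})^2 \varphi_{i\bar i}$.

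The decisive step is to invoke Lemma \ref{lemma-5-3}: when $w$ is large at $(z_{max}, t_{max})$, the above trace is bounded above by
$$\frac{2C^m_n c}{n} - S_{n-m}(X^{-1}) - \theta - \theta\sum_i S_{n-m-1;i}(X^{-1})(X^{i\bar i})^2.$$
Substituting this in reduces the inequality to
$$0 \geq A\theta\Bigl(1 + \sum_i S_{n-m-1;i}(X^{-1})(X^{i\bar i})^2\Bigr) - C_1 \sum_i S_{n-m-1;i}(X^{-1})(X^{i\bar i})^2,$$
for the geometric constant $C_1 = (n+1)\sup_{i,l}|R_{i\bar il\bar l}| - \inf_{i,l} G_{l\bar li\bar i}$. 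Choosing $A > C_1/\theta$ forces a contradiction, so necessarily $w(z_{max}, t_{max}) \leq N+1$; evaluating $F$ at the maximum and combining with the $C^0$ bound from Section \ref{C0} then yields $w \leq Ce^{A(\varphi + ct)}$.

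The main obstacle is the absence here of the singular potential $\rho$ used in Section \ref{uniform-C2} to absorb the degeneracy of the boundary cone condition; we pay for its absence with the factor $2$ (instead of $1$) on the right-hand side of Lemma \ref{lemma-5-3}, and it is precisely this factor that forces the unavoidable $t$-dependent growth $e^{Act}$ in the final bound. Verifying Lemma \ref{lemma-5-3} itself---which must extract strict positivity from the degenerate hypothesis \eqref{inequality-1-5} in the regime where one principal curvature $X_{1\bar 1}$ dominates---is the technical heart of the argument.
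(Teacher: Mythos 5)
Your proposal follows the paper's own proof essentially verbatim: the same test function $\ln w - A(\varphi+ct)$, the same weighted maximum-principle computation culminating in \eqref{inequality-5-47}, the same invocation of Lemma~\ref{lemma-5-3}, and the same choice of $A$ to force the contradiction. Your closing observation is also correct: the extra $\frac{C^m_n c}{n}$ in Lemma~\ref{lemma-5-3} (the factor $2$) is exactly cancelled by the $+c$ coming from differentiating the $ct$ term, which is why the $t$-dependence enters the final bound.
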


\medskip

\section{Long time existence and convergence}

\medskip

\subsection{Higher order estimates}

It is a routine step to obtain higher order estimates through Evans-Krylov theory  and Schauder estimate. Observing the $C^0$ bound is $t$-independent and higher order estimates are local,  we obtain both uniform smoothness outside $Amp (\tilde \chi)$ and global smoothness depending on time $t$. For more details, we refer readers to \cite{Evans1982}\cite{Krylov1982}\cite{Trudinger1983}\cite{Wang1992-1}\cite{Wang1992-2}.

\medskip

\subsection{Long time existence}

When $t = 0$,  the term $- \frac{n (\chi + \tilde + \sqrt{-1} \partial\bar\partial \varphi_0)^m \wedge \omega^{n - m}}{(\chi + \tilde \chi + \sqrt{-1} \partial\bar\partial \varphi_0)^n}$ is a non-degenerate nonlinear elliptic operator. 
By standard parabolic theory, there is a constant $\epsilon_1 > 0$ so that there is a unique smooth admissible solution $\varphi$ to Equation~\eqref{equation-3-1} when $0 \leq t < \epsilon_1$. 
Assume that $\varphi$ exists on the maximal time interval $[0,T)$, where $\epsilon_1 \leq T \leq + \infty$.
It suffices to show that $T = +\infty$.

Suppose that $T < +\infty$. By Theorem~\ref{theorem-5-4} and the $C^0$ bound, we know that there are positive constants $C$ and $A$ so that
\begin{equation}
	w \leq C e^{A c T} < +\infty
\end{equation}
on the maximal time interval $[0,T)$. 
Then we can obtain higher order estimates of $\varphi$ on $[0,T)$ through Evans-Krylov theory and Schauder estimate.
By the short time existence, there is a constant $\epsilon_2 > 0$ so that $\varphi$ can be smoothly extended to $[0,M+\epsilon_2)$, which is a contradiction.

\medskip

\subsection{Convergence}

We shall adapt the argument in \cite{FangLaiSongWeinkove2014} to show that the solution flow converges to a pluripotential solution to Equation~\eqref{elliptic-equation}.
%
%
%
%
%
%
Since $M \setminus Amp(\tilde\chi)$ is  in a proper analytic subset, $M\setminus Amp(\tilde \chi)$ is pluripolar and hence has zero capacity.

Since  all local bounds of derivatives of $\varphi$  in $Amp(\tilde\chi)$ are available, we can find a time sequence $\{t_i\}$ with $t_i \to +\infty$ such that $\varphi (z,t_i) \to \varphi_\infty (z)$ in $C^\infty_{loc} (Amp (\tilde \chi))$. For $z \not\in Amp(\tilde \chi)$, we define
\begin{equation}
\label{definition-5-2}
	\varphi_\infty(z) := \varphi^*_\infty (z) ,
\end{equation}
which is $(\chi + \tilde \chi)-PSH$ by Hartogs lemma and localization. 

Suppose that there is a point $z_0 \in Amp (\tilde \chi)$ such that
$	\lim_{t \to \infty} \partial_t \varphi (z_0 , t) \neq 0 $.
Then there exists a sequence of time points $\{t_i\}$ such that $\lim_{i\to +\infty}t_i = \infty $ and $\left|\partial_t \varphi\right| > \epsilon$ $\forall i$.
Since we have all local bounds of derivatives of $\varphi$  in $Amp(\tilde\chi)$, there is a neighborhood $U$ around $z_0$ and a constant $\delta > 0$ such that
\begin{equation*}
\left|\frac{\partial\varphi}{\partial t}\right| > \frac{\epsilon}{2}, \qquad \forall z \in Amp (\tilde \chi), t\in [t_i,t_i + \delta].
\end{equation*}
Without loss of generality, we may assume that $t_{i + 1} - t_i > \delta$ and then obtain that
\begin{equation*}
\begin{aligned}
	&\quad \int^{+\infty}_0 \left(\int_M \left( \frac{\partial \varphi}{\partial t} \right)^2 (\chi + \tilde \chi + \sqrt{-1} \partial\bar\partial \varphi)^n \right) dt \\
	&\geq \sum^\infty_{i = 1} \int^{t_i + \delta}_{t_i} \left(\int_M \frac{\epsilon^2}{4} (\chi + \tilde \chi + \sqrt{-1}\partial\bar\partial \varphi)^n \right) dt \\
	&= \frac{\epsilon^2 \delta}{4} \sum^{\infty}_{i = 1} \int_M (\chi + \tilde \chi)^n \\
	&= +\infty , 
\end{aligned}
\end{equation*}
which contradicts \eqref{inequality-2-10}. 
It has to hold true that 
\begin{equation}
\label{inequality-5-2}
\lim_{t \to +\infty} \partial_t \varphi (z,t) = 0 \qquad \text{ for all }z \in Amp (\tilde \chi) .
\end{equation}
Suppose that there are $\epsilon > 0$ and a time sequence $\{t_i\}$ with $t_i \to +\infty$ such that
\begin{equation*}
	\Vert \partial_t \varphi (z,t_i)\Vert_{C^k (K)} > \epsilon ,
\end{equation*}
for some $k \in \mathbb{N}$ and compact subset $K \in Amp(\tilde \chi)$. 
Since we have all local bounds of $\partial_t \varphi$ in $Amp (\tilde \chi)$, $\partial_t \varphi (z,t_i)$ is convergent to a nonzero function, which contradicts \eqref{inequality-5-2}. 
We know that $\partial_t\varphi \to 0 $ in $C^\infty_{loc} (Amp(\tilde \chi))$, 
and obtain that
\begin{equation*}
	c(\chi + \tilde \chi + \sqrt{-1} \partial\bar\partial \varphi_\infty)^n = n (\chi + \tilde \chi + \sqrt{-1}\partial\bar\partial \varphi_\infty)^m \wedge \omega^{n - m} \qquad \text{on } Amp (\tilde \chi).
\end{equation*}
By continuity of Complex Monge-Amp\`ere operator and localization,
\begin{equation*}
	c(\chi + \tilde \chi + \sqrt{-1} \partial\bar\partial \varphi_\infty)^n = n (\chi + \tilde \chi + \sqrt{-1}\partial\bar\partial \varphi_\infty)^m \wedge \omega^{n - m} ,
\end{equation*}
in pluripotential sense with
\begin{equation}
\label{equality-5-8}
J_n (\chi + \tilde \chi, \varphi_\infty) = J_n (\chi + \tilde \chi, \varphi_0) .
\end{equation}

Suppose that there exist $\epsilon > 0$ and a time sequence $\{t_i\}$ with $t_i \to + \infty$ such that
\begin{equation*}
	\Vert \varphi(z,t_i) - \varphi_\infty (z)\Vert_{C^k (K)} > \epsilon ,
\end{equation*}
for some $k\in\mathbb{N}$ and compact subset $K\in Amp (\tilde\chi)$. By passing to a subsequence, $\varphi (z,t_i)$ converges to a function $\varphi'_\infty \neq \varphi_\infty$ by Arzel\`a-Ascoli theorem. According to the previous argument, 
\begin{equation*}
	c(\chi + \tilde \chi + \sqrt{-1} \partial\bar\partial \varphi'_\infty)^n = n (\chi + \tilde \chi + \sqrt{-1}\partial\bar\partial \varphi'_\infty)^m \wedge \omega^{n - m} ,
\end{equation*}
in pluripotential sense with
\begin{equation}
\label{equality-5-10}
J_n (\chi + \tilde \chi, \varphi'_\infty) = J_n (\chi + \tilde \chi, \varphi_0) .
\end{equation}
By the uniqueness result in \cite{Sun202210}, $\varphi_\infty - \varphi'_\infty$ is constant in $Amp (\tilde \chi)$. 
Then $\varphi_\infty - \varphi'_\infty$ is constant on $M$ accroding to Definition~\eqref{definition-5-2}.
Comparing \eqref{equality-5-8} and \eqref{equality-5-10}, $\varphi_\infty \equiv \varphi'_\infty$, which is a contradiction!

Further, we can conclude that for any $\varphi_0 \in \mathcal{H}$,
\begin{equation}
\label{inequality-5-6}
c J_n (\chi + \tilde \chi, \varphi_0) - n J_m (\chi + \tilde \chi, \varphi_0) \leq c J_n (\chi + \tilde \chi, \varphi_\infty) - n J_m (\chi + \tilde \chi, \varphi_\infty)
\end{equation}
by the uniqueness up to a constant additive and \eqref{equality-2-9}. The right-side term in \eqref{inequality-5-6} is a uniform bound, and hence Inequality~\eqref{inequality-5-6} can help to characterize the K\"ahler classes with proper Mabuchi energy as in \cite{SongWeinkove2008}.

%
%
%
%
%
%
%
%
%
%
%
%
%
%
%
%
%
%
%
%
%
%
%
%
%
%
%
%
%
%
%
%

\medskip
\noindent
{\bf Acknowledgements}\quad
The author wish to thank  Chengjian Yao and Ziyu Zhang for their helpful discussions and suggestions. The  author is supported by a start-up grant from ShanghaiTech University.


\medskip

\end{document}